\newtheorem{theorem}{Theorem}%
\newtheorem{corollary}{Corollary}%
\newtheorem{remark}{Remark}%
\journal{Nuclear Physics B}
\begin{document}

\begin{frontmatter}



\title{Verified error bounds for the singular values of structured matrices with applications to computer-assisted proofs for differential equations}


\author[inst1]{Takeshi Terao\corref{cor1}}
\author[inst1]{Yoshitaka Watanabe}
\author[inst2]{Katsuhisa Ozaki}
\cortext[cor1]{Corresponding author. Email: terao.takeshi.412@m.kyushu-u.ac.jp}
\affiliation[inst1]{organization={Research Institute for Information Technology, Kyushu University},
            addressline={744 Motooka, Nishi-ku}, 
            city={Fukuoka},
            postcode={819-0395}, 
            state={Fukuoka},
            country={Japan}}
\affiliation[inst2]{organization={Department of Mathematical Science, Shibaura Institute of Technology},
            addressline={307 Fukasaku, Minuma-ku}, 
            city={Saitama},
            postcode={337-8570}, 
            state={Saitama},
            country={Japan}}

\begin{abstract}
This paper introduces two methods for verifying the singular values of the structured matrix denoted by $R^{-H}AR^{-1}$, where $R$ is a nonsingular matrix and $A$ is a general nonsingular square matrix. 
The first of the two methods uses the computed factors from a singular value decomposition (SVD) to verify all singular values; the second estimates a lower bound of the minimum singular value without performing the SVD. 
The proposed approach for verifying all singular values efficiently computes tight error bounds. 
The method for estimating a lower bound of the minimum singular value is particularly effective for sparse matrices. These methods have proven to be efficient in verifying solutions to differential equation problems, that were previously challenging due to the extensive computational time and memory requirements.
\end{abstract}



\begin{keyword}
verified numerical computation \sep singular values\sep sparse matrix

\MSC 65G20 \sep 15A18
\end{keyword}

\end{frontmatter}



\section{Introduction}

Assume a nonsingular square matrix $A$ and Hermitian positive definite matrix $B=B^H$ are given, where $B^H$ denotes the conjugate transpose of $B$. An upper triangular Cholesky factor of $B$, denoted by $R$, is such that $B=R^HR$.
Our goal in this paper is to find an enclosure for the following singular value:
\begin{align}
    \sigma_i:=\sigma_i(R^{-H}AR^{-1})\quad \text{and}\quad \underline{\sigma}_i\leq \sigma_i\leq \overline{\sigma}_i \quad\text{for all $i$},\label{eq:aim1}
\end{align}
where $\sigma_i(\cdot)$ denotes the $i$th largest singular value.
The proposed method produces the enclosure~\eqref{eq:aim1}.

However, there are cases in which it may not be practical to calculate all singular values due to the required computing time and memory.
Moreover, in the case of finding rigorous error bounds for the solution of ordinary and partial differential equations, only an enclosure for the spectral norm is needed (e.g.,~\cite{watanabe2020computer4,watanabe2023computer3}) such as:
\begin{align}
    \underline \sigma_{n}^{-1}\geq \|RA^{-1}R^{H}\|_2\geq\overline \sigma_{n}^{-1},\label{eq:aim2}
\end{align}
where $\sigma_{\min}(\cdot)$ denotes the minimum singular value.
In computer-assisted proofs of solutions to non-linear differential equations based on the Newton-type method, 
the spectral norm $\|RA^{-1}R^{H}\|_2$ in \eqref{eq:aim2} corresponds to an approximate operator norm of the infinite-dimensional operator that linearizes the problem around a suitable approximate solution; its inclusion \eqref{eq:aim2} plays an essential role in verifying the existence of a solution.
Details regarding applications of these values are given in~\cite[Chapter 3.3]{nakao2019numerical}.

When $A$ is Hermitian, $\sigma_i$ are equal to the absolute values of the eigenvalues of $R^{-H}AR^{-1}$.
For such cases, several verification methods have been proposed~\cite{yamamoto1980error,yamamoto1982error,oishi2001fast,rump2010verification,miyajima2010fast,rump2011verified,miyajima2012numerical,miyajima2014fast,rump2022verified,rump2023fast}.
Currently, problems involving large-scale matrices, even when sparse, are solved using supercomputers~\cite{hoshi2020posteriori,ozaki2021verified}.
For the case that $B$ is the identity matrix, multiple verification methods for singular values have been proposed~\cite{yamamoto2001simple,oishi2001fast,rump2011verified,miyajima2014verified,rump2023fast}.
However, there has been little discussion regarding how to verify the bounds of $\sigma_i$ for a general matrix $A$ (cf.~\cite[p.440]{nakao2019numerical}); moreover what has been proposed is not efficient.
In particular, there is no general method for large-scale sparse matrices.
This paper proposes verification methods for the problems~\eqref{eq:aim1} and \eqref{eq:aim2}.
The proposed methods have advantages in terms of accuracy, stability, and computation time.
In addition, the proposed method for \eqref{eq:aim2} applies to sparse matrices $A$ and $B$ and can be shown to be efficient.

The remainder of the paper is organized as follows:
Section~\ref{sec:pre} introduces the notation and describes useful estimates of the norm, eigenvalue, and singular value of matrices.
Section~\ref{sec:proposed} summarizes the proposed methodology:
Subsection \ref{subsec:deco} defines the matrix decomposition used to compute all $\sigma_i$;
Subsection \ref{subsec:veri} describes the proposed verification methods for the problems \eqref{eq:aim1} and \eqref{eq:aim2};
Section~\ref{sec:numex} provides computational results showcasing the performance and limitations of the proposed methods.
Section \ref{sec:PDE} discusses applications of \eqref{eq:aim1} and the effectiveness of the proposed method.

\section{Previous works}\label{sec:pre}
\subsection{Notation}

In this paper, $\Lambda(A)$ represents the spectrum (the set of eigenvalues) and $\Sigma(A)$ is the set of singular values of $A\in\mathbb{C}^{n\times n}$.
Suppose that $\lambda_i(A)\in\Lambda(A)$ and $\sigma_i(A)\in\Sigma(A)$ are ordered such that $|\lambda_1(A)|\leq\dots\leq |\lambda_n(A)|$ and $\sigma_1(A)\geq\dots\geq \sigma_n(A)$.
We define
\begin{align}
    \lambda_{\max}(A):=\max\{|\lambda|\ |\ \lambda\in\Lambda(A)|\},\quad \lambda_{\min}(A):=\min\{|\lambda|\ |\ \lambda\in\Lambda(A)|\},\label{eq:lamminmax}
\end{align}
and
\begin{align}
    \sigma_{\max}(A):=\sigma_1(A),\quad \sigma_{\min}(A):=\sigma_{n}(A).\label{eq:sigminmax}
\end{align}
Unless otherwise stated, the norm $\|\cdot\|$ for a vector or a matrix denotes the spectral norm $\|\cdot\|_2$.
$\kappa_2(A)$ for a nonsingular matrix $A$ denotes the condition number of $A$ such that $\kappa_2(A)=\|A\|\cdot\|A^{-1}\|$.
The number of positive, negative, and zero eigenvalues is denoted by $\mathrm{npe}(A)$, $\mathrm{nne}(A)$, and $\mathrm{nze}(A)$, respectively.
We write $A\succ 0$ ($A\succeq 0$) to denote that a square Hermitian matrix $A$ is positive (semi-) definite.

Let $\mathbb{IC}$ denote the set of complex intervals. In this paper, intervals are represented in boldface italics; for example, $\bm{A} \in \mathbb{IC}^{m \times n}$ denotes an interval matrix. An interval matrix is defined as follows:
\begin{align}
    \bm{A}=\{A\in\mathbb{C}^{m\times n}\ |\ \forall (i, j),\  a_{ij}\in\bm{a}_{ij}\in\mathbb{IC}\}.
\end{align}
For a real interval $\bm{a}$ containing $a\in\bm a$, the infimum $\mathrm{inf}(\bm{a})$ and supremum $\mathrm{sup}(\bm{a})$ are defined such that
\begin{align}
    \mathrm{inf}(\bm{a})\leq  a\leq\mathrm{sup}(\bm{a}),\quad \mathrm{inf}(\bm{a}), \mathrm{sup}(\bm{a})\in \bm{a}.
\end{align}
For a complex interval $\bm{a} \in \mathbb{IC}$, the infimum and supremum are denoted as:
\begin{align}
    \mathrm{inf}(\bm{a})&=\mathrm{inf}(\mathrm{Re}(\bm{a}))+\mathrm{inf}(\mathrm{Im}(\bm{a}))\cdot i,\\ 
    \mathrm{sup}(\bm{a})&=\mathrm{sup}(\mathrm{Re}(\bm{a}))+\mathrm{sup}(\mathrm{Im}(\bm{a}))\cdot i,
\end{align}
where $\mathrm{Re}(\cdot), \mathrm{Im}(\cdot)$, and $i$ are the real part, imaginary part, and imaginary unit, respectively.
Additionally, the midpoint $\mathrm{mid}(\bm{a})$ and radius $\mathrm{rad}(\bm{a})$ of an interval are given by
\begin{align}
    \mathrm{mid}(\bm{a}):=\frac{\mathrm{inf}(\bm{a})+\mathrm{sup}(\bm{a})}{2}\quad\text{and}\quad \mathrm{rad}(\bm{a}):=\frac{|\mathrm{sup}(\bm{a})-\mathrm{inf}(\bm{a})|}{2}.
\end{align}
Additional discussion is required for machine-interval operations but is omitted from this paper (for details, see \cite{moore2009introduction}).

\subsection{Estimation of singular values and the matrix norm}

Assume that the given matrix $A$ is nonsingular.
It is known that $\|A\|=\sigma_{\max}(A)$ and $\|A^{-1}\|=\sigma_{\min}(A)^{-1}$.
For the summation and product of matrices $A$ and $E$,
\begin{align}
    \sigma_i(A)-\|E\|\leq \sigma_i(A+E)\leq \sigma_i(A)+\|E\|\label{eq:sum}\\
    \sigma_i(A)\sigma_{\min}(E)\leq \sigma_i(AE)\leq \sigma_i(A)\|E\|\label{eq:prod}
\end{align}
are satisfied~\cite[Theorem 3.3.16]{horn1994topics}.
Similarly, it follows 
\begin{align}
    \sigma_{\min}(A)\sigma_{i}(E)\leq \sigma_i(AE)\leq \|A\|\sigma_{i}(E).\label{eq:prod2}
\end{align}
Next, we consider the estimations for approximate unitary matrix $X$.
Assume that $X^HX=I+F$, where $I$ is the identity matrix and $\|F\|\leq \epsilon<1$.
Then
\begin{align}
    \sqrt{1-\epsilon}\leq \sigma_{\min}(X),\quad \|X\|\leq \sqrt{1+\epsilon}
\end{align}
are satisfied~\cite{rump2011verified}.
Similarly, for a Hermitian positive definite $B\in\mathbb{C}^{n\times n}$, assume that $X_B$ satisfies $X_B^HBX_B=I+F$ and $R$ is the Cholesky factor as $B=R^HR$.
Then, if $\|F\|\leq \epsilon<1$,
\begin{align}
    \sqrt{1-\epsilon}\leq \sigma_{\min}(RX_B),\quad \|RX_B\|\leq \sqrt{1+\epsilon}\label{eq:borth}
\end{align}
are satisfied~\cite{miyajima2010fast}.

The spectral norms $\|A\|$ and $\||A|\|$ for Hermitian matrix $A$ can be bounded by the infinity norm $\|A\|_{\infty}$, i.e., 
let $e$ be the ones vector as $e=(1,1,\dots,1)^T$, then it follows
\begin{align}
    \|A\|\leq \||A|\|\leq \|A\|_{\infty}=\max_i(|A|e)_i.
\end{align}
Define $\mathrm{mag}(\bm A)=|\mathrm{mid}(\bm A)|+\mathrm{rad}(\bm A)$.
Then, for any Hermitian matrix $A\in\bm A$, it follows
\begin{align}
    \|A\|\leq \|\mathrm{mag}(\bm A)\|\leq \|\mathrm{mag}(\bm A)\|_{\infty}=\max_i(\mathrm{mag}(\bm A)e)_i.
\end{align}

For a general matrix $A$, because
\begin{align}
    \|A\|\leq \||A|\|=\sqrt{\||A|^T|A|\|},
\end{align}
it holds that
\begin{align}
    \|A\|_2&\leq \max_{i}\sqrt{(|A|^T(|A|e))_i}\\
    &\leq\max_i(\mathrm{mag}(\bm A^T)\cdot(\mathrm{mag}(\bm A)e))_i.
\end{align}

\subsection{Estimation of eigenvalues}

It is well-established that for any matrix $A \in \mathbb{C}^{n \times n}$:
\begin{align}
\lambda_{\min}(A) \geq \sigma_{\min}(A),\quad \lambda_{\max}(A) \leq \sigma_{\max}(A)\label{eq:eig1}
\end{align}
from \cite[Theorem~5.6.9]{horn2012matrix}, where the notation was introduced in \eqref{eq:lamminmax} and \eqref{eq:sigminmax}, and it follows
\begin{align}
\lambda_{i}(AB) = \lambda_{i}(BA)\label{eq:eig2}
\end{align}
for all $i$~\cite[p. 55]{horn2012matrix}. 
These relationships are fundamental to understanding the interplay between the eigenvalues and singular values of matrices.
Assume symmetric matrix $A$, nonzero vector $v$, and scalar $\alpha$. 
For the cases $\alpha>0$, $\alpha<0$, and $\alpha=0$, it follows
\begin{align}
    \begin{cases}
        \lambda_i(A)<\lambda_i(A+\alpha vv^H), & \alpha>0,\\
        \lambda_i(A)>\lambda_i(A+\alpha vv^H), & \alpha<0,\\
        \lambda_i(A)=\lambda_i(A+\alpha vv^H), & \alpha=0
    \end{cases}
\end{align}
for all $i$~\cite{golub1973some,bunch1978rank}, respectively.
From this, for symmetric positive definite matrix $B$ and scalar $\beta>0$,
\begin{align}
    \lambda_i(A)<\lambda_i(A+\beta B)=\lambda_i\left(A+\beta\sum_j\lambda_j(B)x_jx_j^H\right)
\end{align}
is satisfied, where $x_j$ is the eigenvector corresponding to $\lambda_j(B)$.

For the special case in which $A$ is Hermitian and $B$ is Hermite positive definite, Rump proposed a verification method for the minimum eigenvalue of $B^{-1}A$~\cite{rump2011verified}.
For $n$-by-$n$ Hermitian matrices $A$ and $0\prec B$, it holds that
\begin{align}
    \beta>0,\quad A^2-\beta B^2\succ 0 \quad \Longrightarrow \quad  \lambda_{\min}(B^{-1}A)^2\geq \beta.\label{eq:mineig2}
\end{align}
In particular,
\begin{align}
    0\prec A,\quad \beta>0,\quad A-\beta B\succ 0 \quad \Longrightarrow \quad  \lambda_{\min}(B^{-1}A)\geq \beta.\label{eq:mineig}
\end{align}
Using the methodologies and relationships previously discussed, we can estimate a lower bound for the smallest eigenvalue, which corresponds to the minimum singular value. The focus of this paper is on a general square matrix $A$. We intend to systematically apply these evaluations to understand and characterize the spectral properties of such matrices.

\subsection{Previous work regarding the issue}

Here, we consider interval matrices $\bm{A}$ and Hermitian $\bm{B}$.
We introduce the previous work to obtain a rigorous enclosure of $\sigma_{\min}:=\sigma_{\min}(\bm{R}^{-H}\bm{A}\bm{R}^{-1})$ in~\cite[p. 440]{nakao2019numerical}, where $\bm{R}$ is the interval Cholesky factor of $\bm{B}$.
Here, $\bm{R}$ is identified as the interval Cholesky factor of the matrix $\bm{B}$. For the implementation of Algorithm~\ref{alg:prework}, we employ MATLAB/INTLAB \cite{Ru99a}, a computational environment that facilitates interval arithmetic operations. Within this framework, the matrices $\bm{A}$ and $\bm{B}$ are represented using the {\tt intval} class provided by INTLAB, ensuring that our calculations adhere to the principles of interval arithmetic for rigorous numerical analysis.

\begin{algorithm}
\caption{Previous work  to obtain a rigorous enclosure of $\|RA^{-1}R^{H}\|$.}\label{alg:prework}
\begin{algorithmic}
\STATE{$\bm{R}=\mathrm{verchol}(\bm{B})$;\hfill Interval Cholesky decomposition for $B$~\cite{alefeld1993cholesky}}
\STATE{$\rho=\mathrm{norm}(\bm{R}\bm{A}^{-1}\bm{R}^{H})$;}
\end{algorithmic}
\end{algorithm}

Note that all operations must be carried out strictly.
A key function is $\mathrm{verchol}$, which computes the interval Cholesky factor of the matrix $\bm{B}$, as described in the works of Alefeld et al. \cite{alefeld1993cholesky,alefeld2009new}.
To compute the matrix $\bm{R}\bm A^{-1}\bm{R}^{H}$, we employ verified numerical computation techniques. 
Initially, we solve the linear equation $\bm{S}\supset \bm A^{-1}\bm{R}^{H}$ using the methods outlined by Rump in \cite{rump2013accurate1,rump2013accurate2}. Subsequently, the product $\bm{R}\bm{S}$ is calculated through interval matrix multiplication, following the efficient algorithms proposed in \cite{oishi2002fast,rump2012fast,ogita2005fast,ozaki2012fast}.
The $\mathrm{norm}$ function for an interval matrix returns the rigorous enclosure of the spectral norm~\cite{rump2011verified}.

\section{The proposed method}\label{sec:proposed}

In this section, we offer a new matrix decomposition for $A\in\mathbb{C}^{n\times n}$ and $B=B^H\in\mathbb{C}^{n\times n}$ to compute $\sigma_{j}(R^{-H}AR^{-1})$, where $B$ is positive definite and $R$ is an upper triangular Cholesky factor.
We then propose a verification method for $\sigma_{j}(R^{-H}AR^{-1})$ using approximate factors of the proposed decomposition.

\subsection{A new matrix decomposition}\label{subsec:deco}

The proposed method considers the matrix decomposition such that
\begin{align}
    AV_B=BU_B\Sigma,\quad V_B^HBV_B=U_B^HBU_B=I,\label{eq:GHSVD}
\end{align}
where $\Sigma\in\mathbb{R}^{n\times n}$ is the diagonal matrix with non-negative real numbers on the diagonal and $V_B, U_B\in\mathbb{C}^{n\times n}$.
Matrices $U_B$ and $V_B$ are called $B$-unitarity matrices when $V_B^HBV_B=U_B^HBU_B=I$ holds.
The factors of \eqref{eq:GHSVD} can be derived from the singular value decomposition of \( R^{-H}AR^{-1} \).  
Let the singular value decomposition be expressed as \( R^{-H}AR^{-1} = U\Sigma V^H \), where \(\Sigma \in \mathbb{R}^{n \times n}\) is a diagonal matrix with non-negative real numbers on its diagonal, and \(U\) and \(V\) are unitary matrices.  
Assuming \(U_B = R^{-1}U\) and \(V_B = R^{-1}V\), these matrices satisfy the following properties:  
\begin{itemize}[labelwidth=8em, leftmargin=9em]
    \item[$B$-unitarity:] \( U_B^HBU_B = U^HU = I \) and \( V_B^HBV_B = V^HV = I \),
    \item[Diagonality:] \( U_B^HAV_B = \Sigma \).
\end{itemize}
Note that this decomposition differs from the BSVD (Theorem 2 in \cite{van1976generalizing}), where \( A = BU_B\Sigma V_B^H \), because \( V_BV_B^H = B^{-1} \neq I \).

The diagonal element $\sigma_i=\Sigma_{ii}$ in \eqref{eq:GHSVD} satisfies
\begin{align}
    \sigma_i=\sigma_i(R^{-H}AR^{-1}).
\end{align}
This value (in particular, $\sigma_{\min}$) is essential in computer-assisted proofs for partial differential equations~\cite{nakao2019numerical}.
Thus, there is a demand for fast inclusion and accurate calculations.

Algorithm~\ref{alg:GHSVD} gives the procedure for calculating the proposed decomposition.
\begin{algorithm}
\caption{The proposed decomposition for $A$ and $B=B^H$ such that $AV_B= BU_B\Sigma$, $U_B^HBU_B=V_B^HBV_B=I$. If $A$ and $B$ are interval matrices, $\mathrm{mid}(\bm{A})$ and $\mathrm{mid}(\bm{B})$ should be given.}\label{alg:GHSVD}
\label{alg:buildtree}
\begin{algorithmic}
\STATE{$R=\mathrm{chol}(B)$;\hfill Cholesky decomposition for $B$}
\STATE{$[U,\Sigma, V]=\mathrm{svd}(R^{-H}AR^{-1})$;\hfill singular value decomposition for $R^{-H}AR^{-1}$}
\STATE{$U_B=R^{-1}U$; $V_B=R^{-1}V$;}\hfill backward substitutions
\RETURN $U_B, \Sigma, V_B$
\end{algorithmic}
\end{algorithm}

This decomposition can be extended to generalized eigenvalue decomposition, such as:
\begin{align}
    \left(\begin{array}{cc}
        O & A^H \\
        A & O
    \end{array}\right)
    X&=
    \left(\begin{array}{cc}
        B & O \\
        O & B
    \end{array}\right)
    X
    \left(\begin{array}{cc}
        \Sigma & O \\
        O & -\Sigma
    \end{array}\right)
    ,\label{eq:geig}\\
    X&=\frac{1}{\sqrt{2}}
    \left(\begin{array}{cc}
        V_B & V_B \\
        U_B & -U_B
    \end{array}\right).
\end{align}
This decomposition is applicable to verification methods for $\sigma_{\min}$.
Even if the given matrices are sparse, $R^{-H}AR^{-1}$ and $RA^{-1}R^{H}$ become dense.
Thus, for large-scale sparse matrices, an approximate $\sigma_{\min}$ cannot be obtained using Algorithm~\ref{alg:GHSVD} due to the required computing time and memory.
In such cases, from \eqref{eq:geig}, $\widehat \sigma_{\min}$ can be computed using eigensolvers for generalized eigenvalue decomposition.

For $A,B\in \mathbb{C}^{n\times n}$, it follows
\begin{align}
    \begin{pmatrix}
        \mathrm{Re}(A)&-\mathrm{Im}(A)\\
        \mathrm{Im}(A)&\mathrm{Re}(A)
    \end{pmatrix}\bar V_B=
    \begin{pmatrix}
        \mathrm{Re}(B)&-\mathrm{Im}(B)\\
        \mathrm{Im}(B)&\mathrm{Re}(B)
    \end{pmatrix}\bar U_B
    \begin{pmatrix}
        \Sigma&O\\
        O&\Sigma
    \end{pmatrix},\label{eq:exgsvd}
\end{align}
where
\begin{align}
    \bar U_B=
    \begin{pmatrix}
        \mathrm{Re}(U_B)&-\mathrm{Im}(U_B)\\
        \mathrm{Im}(U_B)&\mathrm{Re}(U_B)
    \end{pmatrix},\quad 
    \bar V_B=
    \begin{pmatrix}
        \mathrm{Re}(V_B)&-\mathrm{Im}(V_B)\\
        \mathrm{Im}(V_B)&\mathrm{Re}(V_B)
    \end{pmatrix}.
\end{align}
Here, $\bar U_B$ and $\bar V_B$ have $B$-orthogonality and the diagonality such that
\begin{align}
    \bar U_B^T\begin{pmatrix}
        \mathrm{Re}(A)&-\mathrm{Im}(A)\\
        \mathrm{Im}(A)&\mathrm{Re}(A)
    \end{pmatrix}\bar V_B&=
    \begin{pmatrix}
        \Sigma&O\\
        O&\Sigma
    \end{pmatrix},\\
    \bar U_B^T\begin{pmatrix}
        \mathrm{Re}(B)&-\mathrm{Im}(B)\\
        \mathrm{Im}(B)&\mathrm{Re}(B)
    \end{pmatrix}\bar U_B&=
    \begin{pmatrix}
        I&O\\
        O&I
    \end{pmatrix},\\
    \bar V_B^T\begin{pmatrix}
        \mathrm{Re}(B)&-\mathrm{Im}(B)\\
        \mathrm{Im}(B)&\mathrm{Re}(B)
    \end{pmatrix}\bar V_B&=
    \begin{pmatrix}
        I&O\\
        O&I
    \end{pmatrix}.
\end{align}
From these observations~\eqref{eq:geig} and \eqref{eq:exgsvd}, the proposed matrix decomposition \eqref{eq:GHSVD} can be reduced to a real symmetric generalized eigenvalue problem. However, it should be noted that the problem size may increase by up to four times.

\subsection{Verification methods}\label{subsec:veri}

In this section, we propose verification methods for all $\sigma_i$ and a lower bound $\sigma_{\min}$.
With respect to the verification method for all $\sigma_i$, we envisage the cases in which the given matrices are dense or ill-conditioned.
The verification methods for a lower bound $\sigma_{\min}$ are applicable to large-scale sparse matrices.

\subsubsection{For all singular values}

We propose the verification method for the singular values $\sigma\in\mathbb{R}^n$ from~\eqref{eq:GHSVD}.
Assume that the approximations are obtained by Algorithm~\ref{alg:GHSVD} from given matrices $A,B=B^H\in\mathbb{C}^{n\times n}$ such that
\begin{align}
    A\widehat V_B\approx B\widehat U_B\widehat \Sigma,\quad \widehat U_B^HB\widehat U_B\approx I,\quad \widehat V_B^HB\widehat V_B\approx I.\label{eq:app}
\end{align}
The proposed method produces $\widehat \sigma_i$ and error bounds using numerical computations for $1\leq i\leq n$.

\begin{theorem}\label{thm:veri}
    Let $A\in\mathbb{C}^{n\times n}$ and $B=B^H\in\mathbb{C}^{n\times n}$ be given, and $\widehat U_B, \widehat \Sigma=\mathrm{diag}(\widehat \sigma_i)$, and $\widehat V_B$ be approximate results of Algorithm~\ref{alg:GHSVD}. Define 
    \begin{align}
        \delta = \|\widehat U^{H}_BA\widehat V_B-\widehat \Sigma\|\label{eq:assume1}
    \end{align}
    Assume that
    \begin{align}
        \|\widehat U_B^HB\widehat U_B-I\|\leq \alpha <1,\quad \|\widehat V_B^HB\widehat V_B-I\|\leq \beta <1.\label{eq:assume2}
    \end{align}
    Then, 
    \begin{align}
        \frac{\widehat \sigma_i-\delta}{\sqrt{(1+\alpha)(1+\beta)}}\leq \sigma_i\leq \frac{\widehat \sigma_i+\delta}{\sqrt{(1-\alpha)(1-\beta)}}\label{eq:thm1}
    \end{align}
    is satisfied for $1\leq i\leq n$.
\end{theorem}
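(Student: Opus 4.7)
The plan is to reduce the problem to a perturbation statement about $M := R^{-H}AR^{-1}$, sandwiched between two near-unitary matrices, and then chain together the inequalities collected in Section~\ref{sec:pre}. Writing $B = R^H R$ with $R$ the Cholesky factor, I would introduce the two auxiliary matrices $X := R\widehat{U}_B$ and $Y := R\widehat{V}_B$. The key algebraic identity I want to exploit is
\begin{equation}
\widehat{U}_B^H A \widehat{V}_B \;=\; \widehat{U}_B^H R^H \bigl(R^{-H}AR^{-1}\bigr) R\widehat{V}_B \;=\; X^H M Y,
\end{equation}
so that by the definition of $\delta$ in \eqref{eq:assume1} we have $X^H M Y = \widehat{\Sigma} + E$ with $\|E\| \leq \delta$. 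Since $\widehat{\Sigma}$ is the diagonal factor from the SVD inside Algorithm~\ref{alg:GHSVD}, its singular values are exactly the $\widehat{\sigma}_i$, so the additive perturbation inequality \eqref{eq:sum} immediately yields
\begin{equation}
\widehat{\sigma}_i - \delta \;\leq\; \sigma_i(X^H M Y) \;\leq\; \widehat{\sigma}_i + \delta.
\end{equation}

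Next I would connect $\sigma_i(X^H M Y)$ to $\sigma_i(M) = \sigma_i$ by applying the multiplicative bounds \eqref{eq:prod} and \eqref{eq:prod2} twice (once for the product $X^H M$, once for the product of that with $Y$). This gives
\begin{equation}
\sigma_{\min}(X)\,\sigma_{\min}(Y)\,\sigma_i(M) \;\leq\; \sigma_i(X^H M Y) \;\leq\; \|X\|\,\|Y\|\,\sigma_i(M).
\end{equation}
The hypotheses \eqref{eq:assume2} say exactly that $X$ and $Y$ are the sort of approximate unitaries covered by \eqref{eq:borth}, so I can invoke that estimate to conclude
\begin{equation}
\sqrt{1-\alpha}\leq \sigma_{\min}(X),\quad \|X\|\leq \sqrt{1+\alpha},\quad \sqrt{1-\beta}\leq \sigma_{\min}(Y),\quad \|Y\|\leq \sqrt{1+\beta}.
\end{equation}

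Combining these bounds with the two-sided perturbation inequality for $\sigma_i(X^H M Y)$ above then gives the desired two-sided bound \eqref{eq:thm1}: the upper bound on $\sigma_i$ comes from pairing $\sigma_i(X^H M Y) \leq \widehat{\sigma}_i + \delta$ with the lower multiplicative bound $\sqrt{(1-\alpha)(1-\beta)}\,\sigma_i \leq \sigma_i(X^H M Y)$, while the lower bound on $\sigma_i$ pairs $\widehat{\sigma}_i - \delta \leq \sigma_i(X^H M Y)$ with $\sigma_i(X^H M Y) \leq \sqrt{(1+\alpha)(1+\beta)}\,\sigma_i$. The assumptions $\alpha, \beta < 1$ guarantee the denominator is nonzero and $X, Y$ are invertible, so no step is degenerate.

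I do not expect a genuine obstacle; the argument is essentially a bookkeeping exercise combining already-stated singular value inequalities. The one subtlety worth flagging explicitly is justifying that the $\widehat{\sigma}_i$ coming out of Algorithm~\ref{alg:GHSVD} are literally the singular values of $\widehat{\Sigma}$ in the correct (decreasing) order so that \eqref{eq:sum} applies index-by-index; this follows because $\widehat{\Sigma}$ is the diagonal factor of an SVD. The rest is a straight chain of inequalities.
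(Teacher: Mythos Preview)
Your proposal is correct and follows essentially the same route as the paper: both arguments hinge on the identity $\widehat U_B^H A\widehat V_B = (R\widehat U_B)^H(R^{-H}AR^{-1})(R\widehat V_B)$, then chain the multiplicative singular-value bounds \eqref{eq:prod}--\eqref{eq:prod2}, the near-unitary estimates \eqref{eq:borth}, and the additive perturbation bound \eqref{eq:sum}. The only cosmetic difference is that the paper writes the identity as $R^{-H}AR^{-1} = (R\widehat U_B)^{-H}(\widehat U_B^H A\widehat V_B)(R\widehat V_B)^{-1}$ and applies the multiplicative bounds to that expression directly, whereas you bound $\sigma_i(X^H M Y)$ and invert; the resulting inequalities are identical.
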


\begin{proof}
Assume that $R$ is the Cholesky factor of $B$.
From \eqref{eq:prod}, \eqref{eq:prod2}, and
\begin{align}
    R^{-H}AR^{-1}=(R\widehat U_B)^{-H}\widehat U_B^HA\widehat V_B(R\widehat V_B)^{-1},
\end{align}
it follows
\begin{align}
    \frac{\sigma_i(\widehat U_B^HA\widehat V_B)}{\|R\widehat U_B\|\cdot\|R\widehat V_B\|}
    \leq \sigma_i(R^{-H}AR^{-1})
    \leq\frac{\sigma_i(\widehat U_B^HA\widehat V_B)}{\sigma_{\min}(R\widehat U_B)\cdot\sigma_{\min}(R\widehat V_B)}.
\end{align}

Then, from~\eqref{eq:borth} and \eqref{eq:assume2},
\begin{align}
    \frac{\sigma_i(\widehat U_B^HA\widehat V_B)}{\sqrt{(1+\alpha)(1+\beta)}}
    \leq \sigma_i(R^{-H}AR^{-1})
    \leq\frac{\sigma_i(\widehat U_B^HA\widehat V_B)}{\sqrt{(1-\alpha)(1-\beta)}}\label{eq:prrof1}
\end{align}
is satisfied.
Next, from \eqref{eq:sum} and \eqref{eq:assume1}, we obtain
\begin{align}
    \widehat \sigma_i-\delta\leq \sigma_i(\widehat U_B^HA\widehat V_B)\leq \widehat \sigma_i+\delta.\label{eq:prrof2}
\end{align}
From \eqref{eq:prrof1} and \eqref{eq:prrof2}, \eqref{eq:thm1} can be obtained.
\end{proof}

\begin{corollary}
Assume that \eqref{eq:assume1} and \eqref{eq:assume2} are satisfied and $\widehat\sigma_i-\delta>0$,
\begin{align}
    \frac{\sqrt{(1-\alpha)(1-\beta)}}{\widehat \sigma_i+\delta}\leq \sigma_i(RA^{-1}R^{H})\leq \frac{\sqrt{(1+\alpha)(1+\beta)}}{\widehat \sigma_i-\delta}
\end{align}
is satisfied.
\end{corollary}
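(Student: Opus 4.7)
The plan is to deduce the corollary from Theorem~\ref{thm:veri} by inverting the matrix $R^{-H}AR^{-1}$ and its two-sided singular-value bounds. The essential observation is the identity
\begin{align}
    (R^{-H}AR^{-1})^{-1} = R\,A^{-1}R^{H},
\end{align}
which holds whenever $A$ and $R$ are nonsingular. This identity reduces the problem to relating the singular values of $M := R^{-H}AR^{-1}$ to those of $M^{-1}$ through the standard reciprocal formula $\sigma_i(M^{-1}) = 1/\sigma_{n-i+1}(M)$, valid for nonsingular $M$.

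First, I would invoke Theorem~\ref{thm:veri} under the hypotheses \eqref{eq:assume1} and \eqref{eq:assume2} to obtain the enclosure \eqref{eq:thm1} for $\sigma_i(R^{-H}AR^{-1})$. Next, I would use the supplementary assumption $\widehat\sigma_i - \delta > 0$ to conclude that the lower bound
\[
    \frac{\widehat\sigma_i - \delta}{\sqrt{(1+\alpha)(1+\beta)}}
\]
is strictly positive (note that $\alpha,\beta<1$ makes the denominator well defined and positive), so $\sigma_i(R^{-H}AR^{-1}) > 0$ and the inversion of the bounds in \eqref{eq:thm1} is legitimate. Taking reciprocals of all three terms and reversing the inequalities yields
\begin{align}
    \frac{\sqrt{(1-\alpha)(1-\beta)}}{\widehat\sigma_i + \delta}
    \;\leq\; \frac{1}{\sigma_i(R^{-H}AR^{-1})}
    \;\leq\; \frac{\sqrt{(1+\alpha)(1+\beta)}}{\widehat\sigma_i - \delta}.
\end{align}

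Finally, applying the reciprocal formula $1/\sigma_i(M) = \sigma_{n-i+1}(M^{-1})$ together with the identity $M^{-1} = RA^{-1}R^H$ converts the middle term into a singular value of $RA^{-1}R^H$, producing the asserted enclosure. The only subtle point is the indexing convention: because the reciprocal map reverses the ordering of singular values, the bound stated as $\sigma_i(RA^{-1}R^H)$ in the corollary should be understood via this reversal of the index. I do not expect any genuine obstacle here; the corollary is essentially a restatement of Theorem~\ref{thm:veri} obtained by inverting the two-sided bounds and applying the reciprocal identity for singular values of inverses, so no new estimates are needed beyond verifying positivity of the relevant lower bound.
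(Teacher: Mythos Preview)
Your proposal is correct and follows essentially the same route as the paper: the paper's one-line proof also rests on the fact that $\sigma_i(R^{-H}AR^{-1})^{-1}\in\Sigma(RA^{-1}R^{H})$ whenever $\sigma_i(R^{-H}AR^{-1})>0$, which is exactly your inversion-plus-reciprocal argument. Your observation about the index reversal is apt; the paper sidesteps it by phrasing the conclusion as membership in $\Sigma(RA^{-1}R^{H})$ rather than equality with a specifically indexed singular value.
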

\begin{proof}
When $\sigma_i(R^{-H}AR^{-1})>0$, $\sigma_i(R^{-H}AR^{-1})^{-1}\in\Sigma(RA^{-1}R^H)$ is satisfied.
\end{proof}

The following is an algorithm for verifying the singular values~\eqref{eq:aim1} based on Theorem~\ref{thm:veri}.

\begin{algorithm}
\caption{The proposed verification method for singular values $\sigma_{i}(R^{-H}AR^{-1})$, $1\leq i\leq n$ from $\bm{A}\ni A$, $\bm{B}=\bm{B}^H\ni R^HR$, and factors from Algorithm ~\ref{alg:GHSVD}.}\label{alg:veriGHSVD}
\begin{algorithmic}
\REQUIRE $\bm{A}, \bm{B}, \widehat U_B, \widehat \Sigma, \widehat V_B$
\STATE $\delta=\mathrm{NrmBnd}(\widehat U_B^H\bm{A}\widehat V_B-\widehat \Sigma)$;
\STATE{$\alpha=\mathrm{NrmBnd}(\widehat U_B^H\bm{B}\widehat U_B-I)$; $\beta=\mathrm{NrmBnd}(\widehat V_B^H\bm{B}\widehat V_B-I)$;}
\IF{$\alpha<1$ and $\beta<1$}
    \FOR{$1\leq i\leq n$}
        \STATE{$\underline{\sigma}_i=\mathrm{inf}\left(\displaystyle\frac{\widehat\sigma_i-\delta}{\sqrt{(1+\alpha)(1+\beta)}}\right)$; $\overline{\sigma}_i=\displaystyle\mathrm{sup}\left(\frac{\widehat \sigma_i+\delta}{\sqrt{(1-\alpha)(1-\beta)}}\right)$;}
    \ENDFOR
\ENDIF
\RETURN $\underline{\sigma},\overline{\sigma}\in\mathbb{R}^n$
\end{algorithmic}
Note that $\underline{\sigma}_i$ and $\overline{\sigma}_i$ satisfy \eqref{eq:aim1}.
It should also be noted that the required matrices can be obtained by Algorithm~\ref{alg:GHSVD} with $\mathrm{mid}(\bm A)$ and $\mathrm{mid}(\bm B)$ using floating-point arithmetic.
On the other hand, for this algorithm, the interval class matrices $\bm A$ and $\bm B$ are required.
The function $\mathrm{NrmBnd}(\cdot)$ returns a rigorous upper bound of the spectral norm.
\end{algorithm}

The cost of the proposed method flows from the way in which an upper bound $\delta$ is computed.
Note that the computation of $\widehat U_B^HB\widehat U_B$ is duplicated in the computations of $\delta$ and $\alpha$.
In addition, the computation of $B\widehat U_B$ is duplicated in the computations of $A\widehat V_B-B\widehat U_B\widehat \Sigma$ and $\widehat U_B^HB\widehat U_B$.
Thus, our proposed implementation innovations can reduce calculation costs.

Next, $\delta$ can be analyzed as
\begin{align}
    \|\widehat U^{H}_BA\widehat V_B-\widehat \Sigma\|=&\|\widehat U^{H}_B(A\widehat V_B-B\widehat U_B\widehat \Sigma)+(\widehat U^H_BB\widehat U_B-I)\widehat \Sigma\|&=\delta_1 \label{eq:delta1}\\
    \leq &\|\widehat U^{H}_B(A\widehat V_B-B\widehat U_B\widehat \Sigma)\|+\alpha\cdot \widehat\sigma_{\max}&=\delta_2\label{eq:delta2}\\
    \leq &\|\widehat U^{H}_B\|\|A\widehat V_B-B\widehat U_B\widehat \Sigma\|+\alpha\cdot \widehat\sigma_{\max}&=\delta_3 &.\label{eq:delta3}
\end{align}
Based on a comparison of the computations of $\delta_1$ and $\delta_2$, we can reduce the memory cost since the computed result of $\widehat U^H_BB\widehat U_B$ can be removed after computing $\alpha$.
From~\eqref{eq:delta3}, the computation cost of one matrix multiplication can be reduced to compute an upper bound of $\delta$.
Note that $\|\widehat U_B^H\|$ can be bounded by $\mathcal{O}(n^2)$ flops.
However, the overestimation of $\delta_3$ is greater than that of the others.

Verification is based on matrix multiplications; however, the cost depends on such factors as whether the given matrices $A$ and $B$ are sparse.
When $A$ and $B$ are sparse, the cost of the verification is nearly 2 and 3 dense matrix multiplications using \eqref{eq:delta2} and \eqref{eq:delta3}, respectively.
On the other hand, when $A$ and $B$ are dense, the cost of the verification is nearly 5 and 6 dense matrix multiplications using \eqref{eq:delta2} and \eqref{eq:delta3}.
Moreover, there are various interval matrix multiplication algorithms~\cite{rump1999fast,ogita2005fast,ozaki2011tight,ozaki2012fast,rump2012fast,uchino2023inclusion}, where the verification method can achieve high performance with respect to speed, accuracy, or stability.

\subsubsection{For the minimum singular value}

We next consider the verification method based on \eqref{eq:geig}.
While the verification method based on Theorem~\ref{thm:veri} can be used to verify all singular values, it is not applicable to large-scale sparse matrices since, even if matrices $A$ and $B$ are sparse, the factors of the proposed decomposition $U_B$ and $V_B$ become dense.
To solve this problem, we propose a method that verifies only the minimum singular value.
Here, suppose that  
\begin{align}
    \bar A=\left(\begin{array}{cc}
        O & A^H \\
        A & O
    \end{array}\right),\quad 
    \bar B=
    \left(\begin{array}{cc}
        B & O \\
        O & B
    \end{array}\right),\quad 
    \bar \Sigma=
    \left(\begin{array}{cc}
        \Sigma & O \\
        O & -\Sigma
    \end{array}\right)
\end{align}
and consider the eigenvalue $\lambda_{\min}(\bar B^{-1}\bar A)$, which is equal to $\sigma_{\min}$.
\begin{theorem}\label{thm:sp}
    For matrices $A, 0\prec B=B^H\in\mathbb{C}^{n\times n}$, and real scalar $\theta>0$, 
    if $A^HA-\theta B^2$ and $AA^H-\theta B^2$ are positive definite, $\sigma_{\min}\geq \sqrt{\theta}$ is satisfied.
\end{theorem}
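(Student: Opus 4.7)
The plan is to lift the problem to the Hermitian generalized eigenvalue setting $(\bar A, \bar B)$ introduced just before the theorem and then invoke the Hermitian result \eqref{eq:mineig2}. Observe that $\bar A$ is Hermitian (its off-diagonal blocks $A$ and $A^H$ are conjugate transposes of one another) and $\bar B$ is Hermitian positive definite (since $B\succ 0$), so the hypotheses of \eqref{eq:mineig2} are available.

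First I would compute the two squares by block multiplication:
$$
\bar A^2=\begin{pmatrix} A^H A & O \\ O & A A^H \end{pmatrix},
\qquad
\bar B^2=\begin{pmatrix} B^2 & O \\ O & B^2 \end{pmatrix},
$$
so that $\bar A^2-\theta\,\bar B^2$ is block diagonal with diagonal blocks $A^HA-\theta B^2$ and $AA^H-\theta B^2$. A block diagonal Hermitian matrix is positive definite if and only if each block is, so the two positivity assumptions of the theorem are exactly equivalent to $\bar A^2-\theta\,\bar B^2\succ 0$.

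Then I would apply \eqref{eq:mineig2} to the pair $(\bar A,\bar B)$ with $\beta=\theta$, obtaining $\lambda_{\min}(\bar B^{-1}\bar A)^2\geq\theta$. The final step is to identify $\lambda_{\min}(\bar B^{-1}\bar A)$ with $\sigma_{\min}$: the generalized eigenvalue decomposition \eqref{eq:geig} shows that the spectrum of $\bar B^{-1}\bar A$ consists precisely of $\{\pm\sigma_i\}_{i=1}^{n}$, and by the convention \eqref{eq:lamminmax} the quantity $\lambda_{\min}$ denotes the smallest modulus of an eigenvalue; hence $\lambda_{\min}(\bar B^{-1}\bar A)=\sigma_{\min}$. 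Taking square roots then yields $\sigma_{\min}\geq\sqrt{\theta}$.

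The only place requiring genuine care — rather than a real obstacle — is this last identification: one must use the \emph{modulus} convention from \eqref{eq:lamminmax} together with the symmetric $\pm\sigma_i$ spectrum from \eqref{eq:geig}, since otherwise the signed eigenvalue $-\sigma_{\max}$ would be mistaken for the ``smallest'' eigenvalue. Everything else reduces to block arithmetic and a direct appeal to the previously established Hermitian case \eqref{eq:mineig2}, so no Cholesky manipulations of $R^{-H}AR^{-1}$ are needed explicitly.
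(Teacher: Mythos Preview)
Your proof is correct and follows exactly the same route as the paper: compute $\bar A^2$ and $\bar B^2$ as block-diagonal matrices, observe that the two positivity hypotheses are equivalent to $\bar A^2-\theta\bar B^2\succ 0$, and invoke \eqref{eq:mineig2}. Your write-up is in fact more explicit than the paper's one-line proof, particularly in spelling out the identification $\lambda_{\min}(\bar B^{-1}\bar A)=\sigma_{\min}$ via the $\pm\sigma_i$ spectrum from \eqref{eq:geig} and the modulus convention \eqref{eq:lamminmax}.
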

\begin{remark}
    The determination of whether a matrix is positive definite must be rigorous.
    For more detail, see \cite{rump2006verification}.
    INTLAB also supports a function to determine positive definiteness named {\tt isspd}.
\end{remark}
\begin{proof}
    From
    \begin{align}
        \bar A^2=\left(\begin{array}{cc}
            A^HA & O \\
            O & AA^H
        \end{array}\right),\quad 
        \bar B^2=
        \left(\begin{array}{cc}
            B^2 & O \\
            O & B^2
        \end{array}\right)
    \end{align}
    and \eqref{eq:mineig2}, the theorem is proved.
\end{proof}

\begin{algorithm}
\caption{The proposed verification for $\bm{A}$ and $\bm{B}=\bm{B}^H$ returning a lower bound of $\sigma_{\min}$.}\label{alg:veriGHSVD_type2}
\begin{algorithmic}
\REQUIRE $\bm{A}, \bm{B}, 0<\epsilon<1$
\STATE{Define $A=\mathrm{mid}(\bm A)$ and $B=\mathrm{mid}(\bm B)$}
\STATE{Compute $\widetilde \theta_1\approx \lambda_{\min}(B^{-2}A^HA)$ and $\widetilde \theta_2\approx \lambda_{\min}(B^{-2}AA^H)$;}
\STATE{$\theta = (1-\epsilon)\cdot\min\left(\widetilde\theta_1,\widetilde\theta_2\right)$;}
\IF{$\mathrm{isspd}(\bm{A}^H\bm{A}-\theta \bm{B}^2)$ and $\mathrm{isspd}(\bm{A}\bm{A}^H-\theta \bm{B}^2)$ are true} 
\STATE{$\underline{\sigma}_{\min}=\inf(\sqrt{\theta})$;}
\ENDIF
\RETURN $\underline{\sigma}_{\min}$
\end{algorithmic}
\end{algorithm}

The advantage of Theorem~\ref{thm:sp} is that the method can be applied to large sparse matrices.
While it is true that calculations $A^TA$, $AA^T$, and $B^2$ introduce fill-in, resulting in denser matrices, in many practical cases where \( A \) and \( B \) are sufficiently sparse, the resulting matrices still exhibit sparsity.
On the other hand, the Gram matrix of the given matrices makes it difficult to apply the theorem to ill-conditioned problems.

Next, we focus on the number of positive (or negative) eigenvalues.
The generalized eigenvalue problem
\begin{align}
    \bar A\bar X=\bar B\bar X\bar \Sigma
\end{align}
has $n$ positive eigenvalues from \eqref{eq:geig}.
A verified numerical computation method for generalized eigenvalue problems with sparse matrices is known~\cite{yamamoto2001simple}, and the proposed method reduces the computational cost by exploiting the structure of the matrices.

\begin{theorem}\label{thm:type3}
    Let $A\in\mathbb{C}^{n\times n}$, $0\prec B=B^H\in\mathbb{C}^{n\times n}$, and the real constant $\theta>0$ be given.
    Define
    \begin{align}
        G(\theta)=
        \left(\begin{array}{cc}
            \theta B & A^H \\
            A & \theta B
        \end{array}\right) .
    \end{align}
    If $\mathrm{npe}(G(\theta))=n$, it holds that $\sigma_{\min}\geq\theta$.
\end{theorem}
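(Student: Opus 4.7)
The plan is to reduce the inertia of the $2n\times 2n$ Hermitian matrix $G(\theta)$ to a diagonal form whose entries are $\theta\pm\sigma_i$, using Sylvester's law of inertia together with the decomposition already recorded in \eqref{eq:geig}.

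First I would introduce the Cholesky factor $R$ of $B$ and the block-diagonal factor $\bar R = \mathrm{diag}(R,R)$, so that $\bar B = \bar R^H\bar R$. Writing $G(\theta) = \theta \bar B + \bar A$ and factoring gives
$$G(\theta) = \bar R^H\bigl(\theta I + \bar R^{-H}\bar A \bar R^{-1}\bigr)\bar R,$$
which exhibits $G(\theta)$ as $^*$-congruent to the Hermitian matrix $M(\theta):=\theta I + \bar R^{-H}\bar A \bar R^{-1}$. By Sylvester's law of inertia, $\mathrm{npe}(G(\theta)) = \mathrm{npe}(M(\theta))$.

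Next, since $\bar R^{-H}\bar A \bar R^{-1}$ is similar (via $\bar R$) to $\bar B^{-1}\bar A$, its spectrum coincides with the generalized spectrum from \eqref{eq:geig}, namely $\{\pm\sigma_1,\ldots,\pm\sigma_n\}$. Hence the eigenvalues of $M(\theta)$ are $\{\theta\pm\sigma_i\}_{i=1}^{n}$. Because $\theta>0$ and $\sigma_i\geq 0$, each $\theta+\sigma_i$ is strictly positive and already accounts for $n$ positive eigenvalues of $M(\theta)$. The hypothesis $\mathrm{npe}(G(\theta))=n$ therefore forces no $\theta-\sigma_i$ to be positive, which is precisely $\sigma_i\geq\theta$ for every $i$, yielding $\sigma_{\min}\geq\theta$.

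I do not anticipate a genuine obstacle; the argument is essentially a bookkeeping exercise once Sylvester congruence is in place. The only points deserving care are verifying that $\bar R^{-H}\bar A \bar R^{-1}$ is indeed Hermitian (so that its spectrum is real and the eigenvalues of $M(\theta)$ are exactly $\theta\pm\sigma_i$), and handling the boundary case $\sigma_i=\theta$, which produces a zero eigenvalue of $M(\theta)$ that is not counted by $\mathrm{npe}$ and is consistent with the non-strict inequality in the conclusion.
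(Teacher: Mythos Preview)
Your proposal is correct and follows essentially the same route as the paper: both write $G(\theta)=\bar R^{H}(\theta I+\bar R^{-H}\bar A\,\bar R^{-1})\bar R$, invoke Sylvester's law of inertia, read off the eigenvalue list $\{\theta\pm\sigma_i\}$ from \eqref{eq:geig}, and finish with the counting argument. Your explicit handling of the boundary case $\sigma_i=\theta$ is in fact slightly cleaner than the paper, which concludes with a strict inequality $\sigma_{\min}>|\theta|$ that is marginally overstated.
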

\begin{proof}
    For $\Lambda(A)$ denoting the spectrum of $A$,
    it shows that
    \begin{align}
        \Lambda(\bar B^{-1}\bar A)=\Lambda(\bar R^{-H}\bar A\bar R^{-1})=\{\sigma_1,\dots,\sigma_n,-\sigma_n,\dots,-\sigma_1\},
    \end{align}
    where $\bar R$ is the upper triangular Cholesky factor of $\bar B$.
    For $\mathrm{npe}(A)$, 
    it holds that
    \begin{align}
        \mathrm{npe}(\bar A+\theta \bar B)=\mathrm{npe}(\bar R^H(\bar R^{-H}\bar A\bar R^{-1}+\theta I)\bar R)=\mathrm{npe}(\bar R^{-H}\bar A\bar R^{-1}+\theta I)
    \end{align}
    from Sylvester's law of inertia.
    Because
    \begin{align}
        \Lambda(\bar R^{-H}\bar A\bar R^{-1}+\theta I)=\{\sigma_1+\theta,\dots,\sigma_n+\theta,-\sigma_n+\theta,\dots,-\sigma_1+\theta\},
    \end{align}
    it is true that $\sigma_{\min}>|\theta|$ if $\mathrm{npe}(G(\theta))=n$.
\end{proof}

For counting the number of positive eigenvalues of $G(\theta)$, $LDL^T$ decomposition $G(\theta)=LDL^H$ is often used~\cite{yamamoto2001simple}, where $L\in\mathbb{C}^{n\times n}$ is lower triangular and $D\in\mathbb{C}^{n\times n}$ is block diagonal with diagonal blocks of dimension 1 or 2~\cite{bunch1977some}.
Thus, the number of positive eigenvalues of $G(\theta)$ and that of $D$ are the same.
However, the possibility of rounding errors in the calculation of the $LDL^T$ decomposition should be noted.

\begin{theorem}\label{thm:type3-2}
    Let $G(\theta)\in\mathbb{C}^{2n\times 2n}$ be given from Theorem~\ref{thm:type3}.
    Define the real constants $\tau>0$ and $\delta>0$ to satisfy
    \begin{align}
        \|G(\theta)+\tau I- \widehat L\widehat D\widehat L^H\|=:\delta,
    \end{align}
    where $\widehat L, \widehat D$ are the approximate $LDL^T$ factors.
    If $\mathrm{npe}(\widehat D)=n$ and $\tau>\delta$, it holds that $\mathrm{npe}(G(\theta))=n$.
\end{theorem}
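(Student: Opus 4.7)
The plan is to combine Sylvester's law of inertia with Weyl's perturbation bound for Hermitian matrices, exploiting the fact that $\mathrm{npe}(G(\theta))\geq n$ already holds automatically from the structure of $G(\theta)$, so that only the reverse inequality $\mathrm{npe}(G(\theta))\leq n$ needs to be extracted from the approximate factorization.

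First I would observe that, in any $LDL^H$ factorization, $\widehat L$ is unit lower triangular and hence nonsingular; Sylvester's law of inertia then yields $\mathrm{npe}(\widehat L\widehat D\widehat L^H)=\mathrm{npe}(\widehat D)=n$. Ordering the eigenvalues of Hermitian $2n\times 2n$ matrices in decreasing order throughout, this means $\lambda_{n+1}(\widehat L\widehat D\widehat L^H)\leq 0$. Next, set $E:=G(\theta)+\tau I-\widehat L\widehat D\widehat L^H$. Since each of $G(\theta)$, $\tau I$, and $\widehat L\widehat D\widehat L^H$ is Hermitian, so is $E$, and $\|E\|=\delta$. Applying Weyl's inequality for Hermitian perturbations at index $n+1$ gives
\begin{align*}
\lambda_{n+1}(G(\theta)+\tau I)\;\leq\;\lambda_{n+1}(\widehat L\widehat D\widehat L^H)+\|E\|\;\leq\;\delta.
\end{align*}
Subtracting the common shift $\tau$ and invoking the hypothesis $\tau>\delta$ yields $\lambda_{n+1}(G(\theta))\leq \delta-\tau<0$, so at most $n$ eigenvalues of $G(\theta)$ are strictly positive, i.e.\ $\mathrm{npe}(G(\theta))\leq n$.

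For the matching lower bound I would reuse the spectral description already derived in the proof of Theorem~\ref{thm:type3}: by Sylvester's law applied with the Cholesky factor $\bar R$ of $\bar B$, the Hermitian matrix $G(\theta)=\bar R^H(\bar R^{-H}\bar A\bar R^{-1}+\theta I)\bar R$ has the same inertia as $\bar R^{-H}\bar A\bar R^{-1}+\theta I$, whose spectrum is $\{\sigma_i+\theta:1\leq i\leq n\}\cup\{-\sigma_i+\theta:1\leq i\leq n\}$. Since $\sigma_i\geq 0$ and $\theta>0$, the $n$ eigenvalues of the form $\sigma_i+\theta$ are strictly positive, so $\mathrm{npe}(G(\theta))\geq n$. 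Combining the two inequalities delivers the desired equality $\mathrm{npe}(G(\theta))=n$.

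The only mildly delicate point is ensuring that Weyl's inequality is legitimately applicable, which requires the Hermiticity of the residual $E$ and the nonsingularity of $\widehat L$; both are immediate from the construction of the $LDL^H$ factorization. No substantive obstacle arises beyond careful bookkeeping of eigenvalue orderings and the sign of the shift $\tau$.
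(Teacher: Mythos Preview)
Your argument is correct and follows essentially the same route as the paper: Weyl's perturbation inequality applied to $G(\theta)+\tau I=\widehat L\widehat D\widehat L^H+E$ together with Sylvester's law of inertia for $\widehat L\widehat D\widehat L^H$ gives $\mathrm{npe}(G(\theta))\leq n$, and the structural lower bound $\mathrm{npe}(G(\theta))\geq n$ closes the proof. The only minor difference is in how the lower bound is obtained: you reuse Theorem~\ref{thm:type3}'s congruence $G(\theta)=\bar R^{H}(\bar R^{-H}\bar A\bar R^{-1}+\theta I)\bar R$ and read off the inertia from the explicit spectrum, whereas the paper instead invokes eigenvalue monotonicity under a positive-definite shift, $\lambda_i(\bar A)<\lambda_i(\bar A+\theta\bar B)=\lambda_i(G(\theta))$, combined with $\Lambda(\bar A)=\{\pm\sigma_i(A)\}$; both arguments are equally short and elementary.
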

\begin{proof}
    Because $\bar B$ is positive definite, it holds that
    \begin{align}
        \lambda_i(\bar A)< \lambda_i(\bar A+\theta \bar B)=\lambda_i\left(\bar A+\theta \sum_{j=1}^n\lambda_j(\bar B)x_jx_j^H\right)=\lambda_i(G(\theta)),
    \end{align}
    where $x_j$ is the  eigenvector corresponding to the eigenvalue $\lambda_j(\bar B)$. 
    From $$\Lambda(\bar A)=\{\sigma_1(A),\dots,\sigma_n(A),-\sigma_n(A),\dots,-\sigma_1(A)\},$$
    it follows $\mathrm{npe}(G(\theta))\geq n$.
    From
    \begin{align}
        \lambda_i(G(\theta))+\tau \leq \lambda_{i}(\widehat L\widehat D\widehat L^H)+\delta
    \end{align}
    and $\tau>\delta$, it holds
    \begin{align}
        n\leq \mathrm{npe}(G(\theta))\leq \mathrm{npe}(\widehat L\widehat D\widehat L^H)=\mathrm{npe}(\widehat D).
    \end{align}
    Thus, if $\mathrm{npe}(\widehat D)=n$, it holds that $\mathrm{npe}(G(\theta))=n$.
\end{proof}

\href{}{\begin{algorithm}
\caption{The proposed verification for $\bm{A}$ and $\bm{B}=\bm{B}^H$ returning a lower bound of $\sigma_{\min}$.}\label{alg:veriGHSVD_type3}
\begin{algorithmic}
\REQUIRE $\bm{A}, \bm{B}, 0<\epsilon<1$
\STATE{Define $\bar A=\left(\begin{array}{cc}
    0 & \mathrm{mid}(\bm{A}^H) \\
    \mathrm{mid}(\bm{A}) & 0
\end{array}\right)$ and $\bar B=\left(\begin{array}{cc}
    \mathrm{mid}(\bm{B}) & O \\
    O & \mathrm{mid}(\bm{B})
\end{array}\right)$.
}
\STATE{Compute $\widetilde \theta\approx \lambda_{\min}(\bar B^{-1}\bar A)$.}
\STATE{$\theta \approx (1-\epsilon)\widetilde\theta$;}
\STATE{$\bm{G}=\left(\begin{array}{cc}
    \theta \bm{B} & \bm{A}^H \\
    \bm{A} & \theta \bm{B}
\end{array}\right)$;}
\IF{$\mathrm{npe}(\bm{G})=n$} 
\STATE{$\underline{\sigma}_{\min}=\theta$;}
\ENDIF
\RETURN $\underline{\sigma}_{\min}$
\end{algorithmic}
\end{algorithm}}

Finally, a verification method for an upper bound of the minimum singular value $\overline \sigma_{\min}$ is presented.
It is assumed that a lower bound of the minimum singular value $\underline \sigma_{\min}$ has been obtained.
In this case, it holds that
\begin{align}
    \sigma_{\min}\leq\underline \sigma_{\min}+\frac{1}{\sqrt{\sigma_{\min}(B)}}\cdot \frac{\|\bar A\widehat x-\underline\sigma_{\min}\bar B\widehat x\|}{\sqrt{\widehat x^H \bar B\widehat x}}=:\overline \sigma_{\min},
\end{align}
where $\widehat x$ is the approximate eigenvector corresponding to the approximate minimum eigenvalue of $\bar Ax=\lambda \bar Bx$.
This is because, from \cite{miyajima2010fast},
\begin{align}
    |\lambda_{\min}(\bar B^{-1}\bar A)-\underline\sigma_{\min}|
    &\leq \frac{\|\bar B^{-1}\bar A\widehat x-\underline\sigma_{\min}\widehat x\|}{\sqrt{\widehat x^H \bar B\widehat x}}\\
    &\leq \frac{1}{\sqrt{\sigma_{\min}(B)}}\cdot \frac{\|\bar A\widehat x-\underline\sigma_{\min}\bar B\widehat x\|}{\sqrt{\widehat x^H \bar B\widehat x}}.
\end{align}
From this, the rigorous error bounds of the smallest singular value can be ascertained such that $\underline\sigma_{\min}\leq \sigma_{\min}\leq\overline\sigma_{\min}$.

\section{Numerical examples}\label{sec:numex}

In this section, we compare the computing performance of two methods with respect to speed, stability, and accuracy using dense matrices.
The two methods included in the comparison are
\begin{itemize}
    \item[P  :] the previous method using Algorithm \ref{alg:prework} 
    \item[D  :] the proposed method using Algorithms \ref{alg:GHSVD} and \ref{alg:veriGHSVD}
\end{itemize}
The test matrices are those supported by MATLAB.
One of the matrices is a pseudo-random matrix {\tt randn}($n$), where $n$ is the dimension of the matrix.
The other is a pseudo-random matrix with specified singular values, such as
\begin{align}
    \mathtt{gallery}(\mathtt{'randsvd'},n,\mathtt{kappa},3,n-1,n-1,1),
\end{align}
where {\tt kappa} is the specified condition number of $A$ with $\sigma_{\max}(A)\approx 1$.
For details, see \cite{higham2002accuracy}.
We used MATLAB/INTLAB~\cite{Ru99a} for all interval arithmetic.
As the computer environment, we used the supercomputer system Genkai at Kyushu University with one node:
\begin{description}[labelwidth=11em]
   \item[CPU] Intel Xeon Platinum 8490H $\times$ 2 (60 $\times$ 2 cores)
   \item[Amount of Memory] DDR5 4800 MHz, 512 GiB
   \item[Memory Bandwidth] 629 TB/sec
   \item[Software] MATLAB 2024a, INTLAB Ver. 12
\end{description}
The theoretical peak performance is 3{,}456 GFLOPS in double precision.

The relative error for $\sigma_{\min}^{-1}$ satisfying $\underline \rho\leq \sigma_{\min}^{-1}\leq \overline \rho$ denotes
\begin{align}
    \frac{\overline \rho-\underline \rho}{\overline \rho+\underline \rho}\label{eq:relative}.
\end{align}
To estimate the performance of the methods, we compared the best score in ten iterations.

\subsection{Example 1}
In the example, the $n$-by-$n$ real matrices $A$ and $B$ were generated as follows:
\begin{lstlisting}[style=Matlab-editor]
A = randn(n);
B = randn(n); B=n*eye(n)+(B+B')*0.5;
\end{lstlisting}
\begin{table}[htbp]
 \caption{Condition number averages for test matrices $A$ and $B$ (10 runs)}
 \label{tab:cond}
 \centering
  \begin{tabular}{c|cccccc}
   \hline
    $n$           &1000 & 2500 & 5000 & 10000 & 20000 \\\hline \hline
    $\kappa_2(A)$ & 1.27e+03 & 2.94e+03 & 1.15e+04 & 2.09e+04 & 3.92e+04\\
    $\kappa_2(B)$ & 1.09 & 1.06 & 1.04 & 1.03 &1.02\\ 
   \hline
  \end{tabular}
  \vspace{0.5cm}
  \caption{Comparison of relative errors~\eqref{eq:relative} and elapsed times [sec] of the verification methods}
  \label{tab:ex1-1}
  \centering
  \begin{tabular}{r|cc|rr|c}
    \hline
       & \multicolumn{2}{|c}{Relative error of $\sigma_{\min}^{-1}$} & \multicolumn{3}{|c}{Elapsed times [sec]}\\
    \multicolumn{1}{c|}{$n$}    & \multicolumn{1}{c}{P} & \multicolumn{1}{c|}{D} & \multicolumn{1}{c}{P} & \multicolumn{1}{c|}{D} & Ratio (P/D) \\\hline
    2,500    & 5.12e-06 & 3.85e-07 & 26.46 & 1.34 & 18.80\\
    5,000    & 5.13e-06 & 3.80e-06 & 202.33 & 7.44 & 24.93\\
    10,000   & 5.13e-06 & 4.54e-06 & 3,285.43 & 45.48 & 48.20\\
    20,000   & 5.13e-06 & 4.84e-05 & 26,964.60 & 237.38 & 65.79\\
    \hline
  \end{tabular}
\end{table}

Table \ref{tab:cond} shows the condition number averages for the test matrices $A$ and $B$.
As shown by the values in the tables, it is expected that $\kappa_2(A)\sim\mathcal{O}(n)$ and that $B$ is a well-conditioned matrix.
These same matrices are used for the other examples.
Using the matrices, we compared computing performance in terms of accuracy and speed.

From Table~\ref{tab:ex1-1}, the collaborations of the proposed decomposition and verification methods are faster than previous methods.
However, the accuracy of the proposed method deteriorates as the dimension of the matrix increases.
On the other hand, the accuracy reported in previous studies is roughly constant.

\subsection{Example 2}

In this example, the $n$-by-$n$ real matrices $A$ and $B$ were generated as follows:
\begin{lstlisting}[style=Matlab-editor]
A = gallery('randsvd',n,cnd,mode,n-1,n-1,1);
B = randn(n); B=n*eye(n)+(B+B')*0.5;
\end{lstlisting}

\begin{table}[hbtp]
  \caption{Comparison of the relative errors~\eqref{eq:relative} of the verification methods ($n=1000$)}
  \label{tab:ex2-1}
  \centering
  \begin{tabular}{c|cc}
    \hline
       & \multicolumn{2}{|c}{Relative error of $\sigma_{\min}^{-1}$}\\
    \multicolumn{1}{c|}{$\kappa_2(A)$}    & \multicolumn{1}{c}{P} & \multicolumn{1}{c}{D} \\\hline
    \hline
    $10^1$       & 5.34e-06 & 7.71e-11 \\
    $10^3$       & 5.22e-06 & 4.09e-09 \\
    $10^6$       & 5.17e-06 & 2.87e-06 \\
    $10^9$       & 5.16e-06 & 2.34e-03 \\
    $10^{12}$    & 1.34e-05 & inf \\\hline
  \end{tabular}
\end{table}

Table~\ref{tab:ex2-1} shows the relative errors of the verification methods.
The notation `inf' indicates that the enclosure of $\sigma_{\min}$ includes a non-positive number.
Method P is able to enclose the minimum singular value with a high degree of accuracy even when the condition number of matrix $A$ increases.
On the other hand, Method D is less accurate than Method P, while also increasing the condition number of matrix $A$.
The proposed method is highly accurate when the condition numbers of both matrices $A$ and $B$ are low.
However, this may depend on the way the norm function is implemented in INTLAB.
Still, even taking this into account, the proposed method is superior.

\subsection{Example 3}
In this example, the $n$-by-$n$ real matrices $A$ and $B$ were generated as follows:
\begin{lstlisting}[style=Matlab-editor]
A = randn(n);
B = gallery('randsvd',n,-cnd,mode,n-1,n-1,1);
\end{lstlisting}

\begin{table}[hbtp]
  \caption{Comparison of relative errors~\eqref{eq:relative} of the verification methods}
  \label{tab:ex3-1}
  \centering
  \begin{tabular}{c|cc}
    \hline
       & \multicolumn{2}{|c}{Relative error of $\sigma_{\min}^{-1}$}\\
    \multicolumn{1}{c|}{$\kappa_2(B)$}    & \multicolumn{1}{c}{P} & \multicolumn{1}{c}{D} \\\hline
    \hline
    $10^1$       & 2.11e-02 & 5.26e-08 \\
    $10^3$       & failed & 3.14e-07 \\
    $10^6$       & failed & 4.69e-04 \\
    $10^9$       & failed & 1.15e-01 \\
    $10^{12}$    & failed & inf \\\hline
  \end{tabular}
\end{table}

Table~\ref{tab:ex3-1} shows the relative errors of the verification methods.
The notation `failed' indicates that the calculation of the interval Cholesky decomposition failed due to the square root calculation for a non-positive number.
It can be seen that the proposed method is able to stably encompass the minimum singular value even when the condition number of matrix $B$ increases.
The interval Cholesky decomposition may also fail even if the condition number of the matrix is small.
From this, it can be said that the proposed method is more stable than that offered in previous studies.

\section{Application to differential equations}\label{sec:PDE}

Let $\Omega \subset \mathbb{R}^d$ be a convex bounded polygonal ($d=1,2$), and for some integer $m$, let $H^m(\Omega)$ denote the complex $L^2$-Sobolev space of order $m$ on $\Omega$. 
We define the Hilbert space
$
   H^1_0 := \{ u \in H^1(\Omega) \ | \ u=0 \ \text{on} \ \partial \Omega \}.
$
Consider the linear elliptic operator
\begin{equation}\label{sec5-L}
  \mathscr{L} u := -\Delta u + b\cdot \nabla u + c u: \quad  H^2(\Omega)\cap H^1_0(\Omega) \to L^2(\Omega)
\end{equation}
for $b \in L^\infty(\Omega)^d$, $c \in L^\infty(\Omega)$.
The operator $\mathscr{L}$ is a general expression of the linearized operator of a nonlinear problem to find $u \in H^1_0(\Omega)$
satisfying $-\Delta u = f(x,u,\nabla u)$ with a bounded operator $f: H^1_0(\Omega) \to L^2(\Omega)$; in order to apply some fixed-point theorem, the invertibility of $\mathscr{L}$ and its computable bound of the operator norm of $\mathscr{L}^{-1}$ are essential tasks for computer-assisted proofs of $u \in H^1_0(\Omega)$.

Let $S_h$ be a finite dimensional approximation subspace of $H^1_0(\Omega)$
dependent on the parameter $h>0$. For example, $S_h$ is taken to be a
finite element subspace with mesh size $h$.
We define the basis function of $S_h$ by $\{\phi_i\}_{i=1}^N$ for $N := \dim S_h$
and $N \times N$ matrices $A$ and $B$ by
\begin{align}
   [A]_{ij} &= (\nabla\phi_j,\nabla\phi_i)_{L^2} 
               + (b\cdot\nabla\phi_j+c \phi_j,\phi_i)_{L^2}, \label{matrix_A} \\
   [B]_{ij} &= (\nabla\phi_j,\nabla\phi_i), \label{matrix_B}
\end{align}
respectively, where $(\cdot,\cdot)_{L^2}$ is the $L^2$-inner product on $\Omega$.
Note that the matrix $B$ is positive definite.
Tables~\ref{tab:ex1_R=5_C=-15} and~\ref{tab:ex1_R=6.75_C=-1-1.5i}
show the verification results in \eqref{sec5-L} for
\[
   b(x_1,x_2) = R \begin{bmatrix}-x_2+0.5 \\ x_1-0.5 \end{bmatrix}
\]
with $(R,c)=(5,-15)$ and for $(R,c)=(6.75,-1-1.5i)$, 
respectively, which comes from a stationary convection-diffusion equation.
%

\begin{itemize}
    \item[P  :] the previous method with Algorithm \ref{alg:prework} 
    \item[D  :] the proposed method with Algorithm \ref{alg:veriGHSVD}
    \item[S1 :] the proposed method with Algorithm \ref{alg:veriGHSVD_type2}
    \item[S2 :] the proposed method with Algorithm \ref{alg:veriGHSVD_type3}
\end{itemize}



\begin{table}[hbtp]
  \caption{Upper bounds of $\sigma_{\min}^{-1}$ and the computation times of the verification methods. The matrices are given by \eqref{matrix_A} and \eqref{matrix_B} with $(R,c)=(5,-15)$.}
  \label{tab:ex1_R=5_C=-15}
  \centering
  \begin{tabular}{r|cccc|rrrr}
    \hline
      & \multicolumn{4}{|c}{Upper bounds $\sigma_{\min}^{-1}$} & \multicolumn{4}{|c}{Elapsed times [sec]}\\
    \multicolumn{1}{c|}{$n$}    & P & D & S1 & S2 & \multicolumn{1}{c}{P} & \multicolumn{1}{c}{D} & \multicolumn{1}{c}{S1} & \multicolumn{1}{c}{S2} \\\hline
    841    & 4.1234 & 4.1233 & 5.8512 & 4.1233 & 6.79 & 0.98 & 2.43 & 1.08\\
    9,801   & 4.1555 & 4.1555 & 5.9023 & 4.1555 & 298.92 & 100.94 & 4.74 & 0.98 \\
    89,401  & - & - & failed & 4.1625 & - & - & failed & 10.65 \\
    998,001 & - & - & failed &  4.1628 & - & - & failed & 1408.68 \\
    \hline
  \end{tabular}
\end{table}

\begin{table}[hbtp]
  \caption{Upper bounds of $\sigma_{\min}^{-1}$ and the computation times of the verification methods. The matrices are given by \eqref{matrix_A} and \eqref{matrix_B} with $(R,c) = (6.75,-1-1.5i)$.}
  \label{tab:ex1_R=6.75_C=-1-1.5i}
  \centering
  \begin{tabular}{r|cccc|rrrr}
    \hline
      & \multicolumn{4}{|c}{Upper bounds $\sigma_{\min}^{-1}$} & \multicolumn{4}{|c}{Elapsed times [sec]}\\
    \multicolumn{1}{c|}{$n$}    & P & D & S1 & S2 & P & D & S1 & S2 \\\hline
    841    & 1.0496 & 1.0495 & 1.5436 & 1.0496 & 29.65 & 4.28 & 7.75 & 1.12\\
    9,801   & 1.0497 & 1.0497 & 1.5597 & 1.0497 & 983.32 & 339.53 & 11.82 & 3.08 \\
    89,401  & - & - & failed & 1.0497 & - & - & failed & 46.79 \\
    998,001 & - & - & failed &  1.0500 & - & - & failed & 2340.33 \\
    \hline
  \end{tabular}
\end{table}

The results shown in Tables \ref{tab:ex1_R=5_C=-15} and \ref{tab:ex1_R=6.75_C=-1-1.5i} indicate that the proposed method effectively maintains accuracy and speed for both real and complex coefficient matrices. While Method S1 operates quickly, it faces challenges in numerical stability and precision when applied to large-scale problems. On the other hand, Method S2 can handle large-scale problems that were previously unmanageable with traditional methods, and it does so with considerable speed. The numerical stability of Method S2 is also satisfactory. 

\section{Conclusion}
This paper discusses a verification method for the singular values of \( RA^{-1}R^H \) and proposes both matrix decomposition techniques and their associated verification methods. 
Two distinct methods are introduced:
\begin{enumerate}
    \item A high-stability method, which computes a lower bound of the minimum singular value more accurately than previous methods in practical applications.
    \item A fast method designed for large-scale sparse matrices, which can be applied to real matrices with dimensions of approximately \( n \approx 1,000,000 \).
\end{enumerate}
Both methods show improved efficiency compared to the methods proposed in previous studies.

\section*{Acknowledgments}
This work was supported by Grants-in-Aid from the Ministry of Education, Culture, Sports, Science and Technology of Japan (Nos. 23K28100 and 24H00694).
The computations were mainly carried out using the computer facilities at the Research Institute for Information Technology, Kyushu University, Japan.
 
 \bibliographystyle{elsarticle-num} 
 \bibliography{cas-refs}
\end{document}